\documentclass[12pt]{article}

\usepackage{dsfont}
\usepackage{amsfonts,amsmath,amsthm}
\usepackage{algorithm, algorithmic}
\usepackage{color}
\usepackage{graphicx}
\usepackage{stmaryrd}        

\usepackage[paper=a4paper,dvips,top=2cm,left=2cm,right=2cm,
    foot=1cm,bottom=4cm]{geometry}

\begin{document}

\title{A Note on Quaternion Skew-Symmetric Matrices}
\author{ Liqun Qi\footnote{%
    Department of Applied Mathematics, The Hong Kong Polytechnic University, Hung Hom,
    Kowloon, Hong Kong
    ({\tt maqilq@polyu.edu.hk}).}
    \and and \
    Ziyan Luo\footnote{Department of Mathematics,
  Beijing Jiaotong University, Beijing 100044, China. (zyluo@bjtu.edu.cn). This author's work was supported by NSFC (Grant No.  11771038) and Beijing Natural Science Foundation (Grant No.  Z190002).}
}
\date{\today}
\maketitle

\begin{abstract}
The product of a complex skew-symmetric matrix and its conjugate transpose is a positive  semi-definite Hermitian matrix with nonnegative eigenvalues, with a property that each distinct positive eigenvalue has even multiplicity.   This property plays a key role for Professor Loo-Keng Hua to establish the unitary equivalence theorem for complex skew-symmetric matrices.   We show that this property is no longer true for quaternion skew-symmetric matrices.   This poses a difficulty for finding the canonical form of a quaternion skew-symmetric matrix under unitary equivalence, which may be crucial for the spectral theory of dual quaternion matrices.   We show that the inverse of a quaternion skew-symmetric matrix may not be skew-symmetric though the inverse of a nonsingular complex skew-symmetric matrix is always skew-symmetric.   We also present the concept of basic quaternion skew-symmetric matrices.

\medskip

  \medskip

  \textbf{Key words.} skew-symmetric matrices, complex matrices, quaternion matrices, unitary equivalence.

\end{abstract}

\renewcommand{\Re}{\mathds{R}}
\newcommand{\rank}{\mathrm{rank}}
\renewcommand{\span}{\mathrm{span}}
\newcommand{\X}{\mathcal{X}}
\newcommand{\A}{\mathcal{A}}
\newcommand{\I}{\mathcal{I}}
\newcommand{\B}{\mathcal{B}}
\newcommand{\C}{\mathcal{C}}
\newcommand{\OO}{\mathcal{O}}
\newcommand{\e}{\mathbf{e}}
\newcommand{\0}{\mathbf{0}}
\newcommand{\dd}{\mathbf{d}}
\newcommand{\ii}{\mathbf{i}}
\newcommand{\jj}{\mathbf{j}}
\newcommand{\kk}{\mathbf{k}}
\newcommand{\va}{\mathbf{a}}
\newcommand{\vb}{\mathbf{b}}
\newcommand{\vc}{\mathbf{c}}
\newcommand{\vg}{\mathbf{g}}
\newcommand{\vr}{\mathbf{r}}
\newcommand{\vt}{\rm{vec}}
\newcommand{\vx}{\mathbf{x}}
\newcommand{\vy}{\mathbf{y}}
\newcommand{\vu}{\mathbf{u}}
\newcommand{\vv}{\mathbf{v}}
\newcommand{\y}{\mathbf{y}}
\newcommand{\vz}{\mathbf{z}}
\newcommand{\T}{\top}

\newtheorem{Thm}{Theorem}[section]
\newtheorem{Def}[Thm]{Definition}
\newtheorem{Ass}[Thm]{Assumption}
\newtheorem{Lem}[Thm]{Lemma}
\newtheorem{Prop}[Thm]{Proposition}
\newtheorem{Cor}[Thm]{Corollary}
\newtheorem{example}[Thm]{Example}

\section{Introduction}

In 1944, Loo-Keng Hua established the following theorem for unitary equivalence of complex skew-symmetric matrices.   See Theorem 7 of \cite{Hu44}.

\begin{Thm} \label{t1.1}
Suppose that $Z$ is an $n \times n$ nonsingular complex skew-symmetric matrix.   Then we have an $n \times n$  unitary matrix $U$ such that
\begin{equation}
UZU^\top = \Sigma,
\end{equation}
where $\Sigma$ is a block diagonal matrix,
$$\Sigma = {\rm diag}\left(\begin{bmatrix} 0 & \sigma_1 \\ -\sigma_1 & 0 \end{bmatrix}, \cdots,
\begin{bmatrix} 0 & \sigma_k \\ -\sigma_k & 0 \end{bmatrix}\right),$$
$\sigma_1, \cdots, \sigma_k$ are real nonzero numbers, $\sigma_1^2, \sigma_1^2, \cdots, \sigma_k^2, \sigma_k^2$ are the eigenvalues of the positive definite Hermitian matrix $-Z\bar Z$,   and $n=2k$.
\end{Thm}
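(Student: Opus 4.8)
The plan is to recast the statement as a simultaneous-reduction problem for the two structures that $Z$ carries: the Hermitian inner product, which the unitary $U$ must preserve, and the alternating bilinear form $B(u,v):=u^{\top}Zv$, whose canonical shape we must reach. Writing $p_i$ for the transpose of the $i$-th row of $U$, one checks directly that $U$ is unitary if and only if $\{p_1,\dots,p_n\}$ is a Hermitian-orthonormal basis of $\mathbb{C}^n$, and that $(UZU^{\top})_{ij}=p_i^{\top}Zp_j=B(p_i,p_j)$. Since $Z^{\top}=-Z$, the form $B$ is alternating, so the whole theorem reduces to exhibiting one orthonormal basis in which $B$ becomes the direct sum of the blocks $\begin{bmatrix}0&\sigma\\-\sigma&0\end{bmatrix}$. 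Beforehand I record two facts: $n$ is even, because $\det Z=\det Z^{\top}=\det(-Z)=(-1)^n\det Z$ together with $\det Z\neq 0$ forces $(-1)^n=1$; and $M:=-Z\bar Z=ZZ^{*}$ is Hermitian and, by nonsingularity of $Z$, positive definite, so all its eigenvalues $\lambda$ are real and positive.

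The crux is a pairing carried by each eigenspace $E_\lambda=\{x:Mx=\lambda x\}$. I would introduce the conjugate-linear map $J(x):=Z\bar x$. Using $Z\bar Z=-M$ one computes $J^2(x)=Z\overline{Z\bar x}=Z\bar Z x=-Mx$, so $J(E_\lambda)\subseteq E_\lambda$ and $J^2=-\lambda I$ on $E_\lambda$. This single identity is the heart of the matter, and it is the step I expect to be the main obstacle to obtain cleanly, since it is precisely what fails in the quaternion setting. From it the even-multiplicity property is immediate: if $Jx=\mu x$ for some nonzero $x\in E_\lambda$, then $J^2x=\bar\mu\mu x=|\mu|^2x=-\lambda x$ would give $|\mu|^2=-\lambda<0$, which is impossible; hence $x$ and $Jx$ are always independent, $E_\lambda$ splits into two-dimensional $J$-invariant pieces, and $\dim E_\lambda$ is even.

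Next I would build the basis one eigenspace at a time. For a unit vector $x\in E_\lambda$ set $y:=\lambda^{-1/2}Z\bar x$. Orthogonality $\langle x,y\rangle=\lambda^{-1/2}\bar x^{\top}Z\bar x=0$ follows at once from the alternating property, while $\|y\|=1$ follows from $Z^{*}Z=\bar M$ and $x^{*}Mx=\lambda$; thus $\{x,y\}$ is orthonormal, and since $\lambda^{-1/2}J$ is anti-unitary with square $-I$, its restriction makes $E_\lambda$ an orthogonal direct sum of such pairs, because the orthogonal complement of a $J$-invariant subspace is again $J$-invariant. Eigenvectors belonging to distinct eigenvalues of the Hermitian matrix $M$ are automatically orthogonal, so collecting the pairs over all eigenspaces yields a global orthonormal system $\{x_i,y_i\}$; I then take the $p$'s to be the conjugates of these vectors, ordering each block as $(\bar y_i,\bar x_i)$.

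It remains to evaluate $B$ on this basis, which is pure bookkeeping once the governing relations $Z\bar x=\sqrt{\lambda}\,y$ and $Z\bar y=-\sqrt{\lambda}\,x$ are in hand, i.e. $J$ rotates within each pair. Every value $B(p,p')=p^{\top}Zp'$ then collapses to a Hermitian inner product times $\pm\sqrt{\lambda}$: on a single pair this gives $B(\bar y_i,\bar x_i)=\sqrt{\lambda_i}=:\sigma_i$ and $B(\bar x_i,\bar y_i)=-\sigma_i$ with zero diagonal, producing exactly the block $\begin{bmatrix}0&\sigma_i\\-\sigma_i&0\end{bmatrix}$, while any cross term between different pairs reduces to an inner product $\langle\cdot,\cdot\rangle=0$. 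Since each block consumes two eigenvectors of $M$ with eigenvalue $\lambda_i=\sigma_i^2$, the numbers $\sigma_1^2,\sigma_1^2,\dots,\sigma_k^2,\sigma_k^2$ are precisely the eigenvalues of $M=-Z\bar Z$ and $n=2k$, completing the proof. The only genuinely delicate point throughout is the pairing identity $J^2=-\lambda I$; everything after it is orthonormalization and routine computation.
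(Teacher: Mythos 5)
Your proof is correct, but be aware that the paper itself contains no proof of Theorem 1.1: it is quoted from Hua \cite{Hu44}, and the paper only tells us the shape of Hua's argument, namely that it rests on Theorem 6 of \cite{Hu44} --- the characteristic polynomial of $-Z\bar Z$ is a perfect square, whence each distinct positive eigenvalue of the positive definite Hermitian matrix $-Z\bar Z$ has even multiplicity. You reach that same pivotal fact by a genuinely different and more structural device: the conjugate-linear map $J(x)=Z\bar x$ satisfying $J^2=-M$ on each eigenspace $E_\lambda$, i.e.\ a quaternionic structure on $E_\lambda$, which forces even dimension and \emph{simultaneously} produces the orthonormal pairs $\{x,\lambda^{-1/2}Z\bar x\}$ from which $U$ is assembled --- so the even-multiplicity lemma and the construction of the canonical basis both fall out of one identity, instead of a determinant computation followed by a separate basis-building step. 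The details check out: $J^2(x)=Z\overline{Z\bar x}=Z\bar Zx=-Mx$ uses $\overline{AB}=\bar A\bar B$, valid over $\mathbb{C}$; orthogonality of $x$ and $Jx$ is exactly the alternating property; anti-unitarity of $\lambda^{-1/2}J$ gives $J$-invariance of orthogonal complements; and the bookkeeping with rows ordered $(\bar y_i,\bar x_i)$ does yield the blocks with $\sigma_i=\sqrt{\lambda_i}$ and the eigenvalue list $\sigma_1^2,\sigma_1^2,\dots,\sigma_k^2,\sigma_k^2$. The only step you assert without verification is $J(E_\lambda)\subseteq E_\lambda$; it is a one-line check ($M(Z\bar x)=Z\bar M\bar x=\lambda Z\bar x$, using $\bar ZZ=-\bar M$ and the reality of $\lambda$) and should be stated, but it is not a genuine gap. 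Finally, your diagnosis that $J^2=-\lambda I$ is the fragile step is exactly right and matches the paper's whole point: over the quaternions $\overline{AB}\ne\bar A\bar B$ in general, the quaternionic-structure argument collapses, and indeed Theorem \ref{t4.2} exhibits a skew-symmetric $Z$ for which $-Z\bar Z$ has three distinct positive right eigenvalues, so no analogue of the even-multiplicity property survives.
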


Hua \cite{Hu44} pointed out that this theorem may be extended to the singular case without any difficulty.

The proof of Hua's theorem is elegant.   Unaware of this result, in 1960, Stander and Wiegmann \cite{SW60} reproved this result and extended it to quaternion $*-$skew-symmetric matrices.   In 1961, Youla \cite{Yo61} pointed out this,  and established a unitary equivalence result for more general complex matrices.

In \cite{QL21}, we introduced eigenvalues and subeigenvalues for dual complex matrices.   We derived an eigenvalue and subeigenvalue decomposition theorem for dual complex Hermitian matrices.   Then we presented singular value decomposition for general dual complex matrices.   In that process, we applied Theorem \ref{t1.1}.

The product of a complex matrix and its conjugate transpose is a positive semi-definite Hermitian matrix with nonnegative eigenvalues.    The product of a quaternion matrix and its conjugate transpose is still a positive semi-definite Hermitian matrix with nonnegative right eigenvalues.

For a complex skew-symmetric matrix, each distinct positive eigenvalue of such a product has even multiplicity.   The proof of Theorem \ref{t1.1} used this property of complex skew matrices.   In this paper, we show that this property does not hold for quaternion skew-symmetric matrices.   This poses a difficulty for finding the canonical form of a quaternion skew-symmetric matrix under unitary equivalence, which may be crucial for the spectral theory of dual quaternion matrices.

In the next section, we summarize some preliminary knowledge on quaternions and quaternion matrices.

We show in Section 3 that if one tries to extend Theorem \ref{t1.1} to quaternion matrices directly, then it is necessary to show that each distinct positive right eigenvalue of the product of a quaternion skew-symmetric matrix and its conjugate transpose has even multiplicity.

In Section 4, we present a throughout analysis on this issue for $2\times 2$ and $3\times 3$ quaternion skew-symmetric matrices.   We show that the product of a $2\times 2$ nonzero quaternion skew-symmetric matrix and its conjugate transpose has only one double positive right eigenvalue $1$.   Thus, this property is not violated in the $2\times 2$ case.    Then we present a condition for a $3\times 3$ nonzero quaternion skew-symmetric matrix that its product with its conjugate transpose has a zero right eigenvalue and a double positive right eigenvalue.   If this condition is violated, then that product has three positive right eigenvalues.   We show that these positive right eigenvalues can be distinct to each other.
Hence, Theorem \ref{t1.1} cannot be extended to quaternion matrices directly.    Some changes are needed.

It is a common knowledge that the inverse of a nonsingular complex skew-symmetric matrix is always skew-symmetric.   In Section 5, we show that the inverse of a $3\times 3$ quaternion skew-symmetric matrix, if exists, is not skew-symmetric.

Then, in Section 6, we present the concept of basic quaternion skew-symmetric matrices.

In Section 7, we show that the canonical form of a quaternion skew-symmetric matrix under unitary equivalence is important for studying spectral theory of dual quaternion matrices, while dual quaternions are widely used in robotics and computer graphics.  Thus, it is a challenging and meaningful task to extend Theorem \ref{t1.1} to quaternion skew-symmetric matrices.

\section{Quaternions and Quaternion Matrices}

In this section, we collect some basic knowledge about quaternions and quaternion matrices \cite{Ch98, Ro14, WLZZ18, Zh97}.

\subsection{Quaternions}

Let ${\mathbb R}$, $\mathbb C$ and $\mathbb Q$ be the sets of the real numbers, the complex numbers and the quaternions, respectively.  Denote scalars, vectors and matrices by small letters, bold small letters and capital letters, respectively.
A quaternion $q$ has the form
$$q = q_0 + q_1\ii + q_2\jj + q_3\kk,$$
where $q_0, q_1, q_2$ and $q_3$ are real numbers, $\ii, \jj$ and $\kk$ are three imaginary units of quaternions, satisfying
$$\ii^2 = \jj^2 = \kk^2 =\ii\jj\kk = -1,$$
$$\ii\jj = -\jj\ii = \kk, \ \jj\kk = - \kk\jj = \ii, \kk\ii = -\ii\kk = \jj.$$
The real part of $q$ is Re$(q) = q_0$.   The imaginary part of $q$ is Im$(q) = q_1\ii + q_2\jj +q_3\kk$.
A quaternion is called imaginary if its real part is zero.
The multiplication of quaternions satisfies the distribution law, but is noncommutative.

The conjugate of $q = q_0 + q_1\ii + q_2\jj + q_3\kk$ is
$$\bar q \equiv q^* = q_0 - q_1\ii - q_2\jj - q_3\kk.$$
The magnitude of $q$ is
$$|q| = \sqrt{q_0^2+q_1^2+q_2^2+q_3^2}.$$

It follows the inverse of a nonzero quaternion $q$ is given by
$$q^{-1} = {\bar q \over |q|^2}.$$

Two quaternions $p$ and $q$ are said to be similar if there exists a nonzero $u$ such that $u^{-1}pu = q$.   This is denoted as $p \sim q$.  Then $\sim$ is an equivalence relation.  Denote the equivalence class containing $q$ by $[q]$.    If $p \sim q$, then $|p|=|q|$.    Note that if $q$ is real, $[q]$ is a singleton.

A quaternion $q$ is called a unit quaternion if $|q|=1$.    Let $S^3$ be the set of all unit quaternions, and $S^2$ be the set of all unit imaginary quaternions.   Then, by \cite{Ch98}, for any $q \in S^3$, the Euler formula
\begin{equation} \label{ee1}
q = e^{\omega \theta} = \cos \theta + \omega \sin \theta,
\end{equation}
still holds, where $\omega \in S^2$ and $\theta \in {\mathbb R}$.

Denote the $n$-dimensional quaternion vector space by ${\mathbb Q}^n$.  For
$\vx = (x_1, \cdots, x_n)^\top \in {\mathbb Q}^n$, we have
$\vx^* = (\bar x_1, \cdots, \bar x_n)$, and
$$\|\vx\|^2 = \vx^*\vx = |x_1|^2 + \cdots, |x_n|^2.$$

Let $\vx, \vy \in {\mathbb Q}^n$. If $\vx^*\vy = 0$, then we say that $\vx$ and $\vy$ are orthogonal.
If $\vx^{(1)}, \cdots, \vx^{(n)} \in {\mathbb Q}^n$,
$$\left(\vx^{(i)}\right)^*\vx^{(j)} = \delta_{ij},$$
for $i, j = 1, \cdots, n$, where $\delta_{ij}$ is the Kronecker symbol, then $\left\{ \vx^{(1)}, \cdots, \vx^{(n)} \right\}$ forms an orthonormal basis of ${\mathbb Q}^n$.

\subsection{Quaternion Matrices}

Denote the collections of real, complex and quaternion $m \times n$ matrices by ${\mathbb R}^{m \times n}$, ${\mathbb C}^{m \times n}$ and ${\mathbb Q}^{m \times n}$, respectively.

A quaternion matrix $A= (a_{ij}) \in {\mathbb Q}^{m \times n}$ can be denoted as
\begin{equation} \label{ee2}
A = A_0 + A_1\ii + A_2\jj + A_3\kk,
\end{equation}
where $A_0, A_1, A_2, A_3 \in {\mathbb R}^{m \times n}$.   The transpose of $A$ is $A^\top = (a_{ji})$. The conjugate of $A$ is $\bar A = (\bar a_{ij})$.   The conjugate transpose of $A$ is $A^* = (\bar a_{ji}) = \bar A^\top$.

The Frobenius norm of $A$ is
$$\|A\|_F  = \sqrt{\sum_{i=1}^m \sum_{j=1}^n |a_{ij}|^2}.$$

Let $A \in {\mathbb Q}^{m \times n}$ and $B \in {\mathbb Q}^{n \times r}$.   Then we have $(AB)^* = B^*A^*$.   But in general, $(AB)^\top \not = B^\top A^\top$ and $\overline {AB} \not = \bar A \bar B$ in general.

A square quaternion matrix $A \in {\mathbb Q}^{n \times n}$ is called normal if $A^*A = AA^*$, Hermitian if $A^* = A$; unitary if $A^*A = I$; and invertible (nonsingular) if $AB = BA = I$ for some $B \in {\mathbb Q}^{n \times n}$.
We have $(AB)^{-1} = B^{-1}A^{-1}$ if $A$ and $B$ are invertible, and $\left(A^*\right)^{-1} = \left(A^{-1}\right)^*$ if $A$ is invertible.

A Hermitian matrix $A \in {\mathbb Q}^{n \times n}$ is called positive semi-definite if for any $\vx \in {\mathbb Q}$, $\vx^*A\vx \ge 0$; $A$ is called positive definite if for any $\vx \in {\mathbb Q}$ with $\vx \not = \0$,  we have $\vx^*A\vx > 0$.

A square quaternion matrix $A \in {\mathbb Q}^{n \times n}$ is unitary if and only if its column (row) vectors form an orthonormal basis of ${\mathbb Q}^n$.

Suppose that $A \in {\mathbb Q}^{n \times n}$, $\vx \in {\mathbb Q}^n$, $\vx \not = \0$, and $\lambda \in
{\mathbb Q}$, satisfy
\begin{equation} \label{ee3}
A\vx = \vx\lambda.
\end{equation}
Then $\lambda$ is called a right eigenvalue of $A$, with $\vx$ as an associated right eigenvector.  If $\lambda$ is complex and its imaginary part is nonnegative, then $\lambda$ is called a standard right eigenvalue of $A$.    The following theorem collects some know results of the right eigenvalues of quaternion matrices \cite{WLZZ18}.

\begin{Thm} \label{t2.1}
Suppose that $A \in {\mathbb Q}^{n \times n}$.   Then we have the following properties of the right eigenvalues of $A$.

1. If $\lambda \in {\mathbb Q}$ is a right eigenvalue of $A$, then any $\mu \in [\lambda]$ is also a right eigenvalue of $A$ with the same right eigenvectors.

2.  If $\lambda \in {\mathbb C}$ is a right eigenvalue of $A$, then any $\bar \lambda$ is also a right eigenvalue of $A$ with the same right eigenvectors.

3. $A$ has exactly $n$ standard right eigenvalues.

4. $A$ is normal if and only if there is a unitary matrix $U \in {\mathbb Q}^{n \times n}$ such that
\begin{equation} \label{ee4}
U^*AU = D = {\rm diag}(\lambda_1, \cdots, \lambda_n),
\end{equation}
where $\lambda_1, \cdots, \lambda_n$ are right eigenvalues of $A$.

5. $A$ is Hermitian if and only if $A$ is normal, and all the right eigenvalues of $A$ are real.

6. If $A$ is Hermitian, then $A$ has exactly $n$ real right eigenvalues, and there is a unitary matrix $U \in {\mathbb Q}^{n \times n}$ such that (\ref{ee4}) holds with $\lambda_1 \ge \cdots \ge \lambda_n$ as the real right eigenvalues of $A$; $A$ is positive semi-definite if and only if $\lambda_n \ge 0$; $A$ is positive definite if and only if $\lambda_n > 0$.

7. $A$ is unitary if and only if $A$ is normal, and all the right eigenvalues of $A$ are in $S^3$.
\end{Thm}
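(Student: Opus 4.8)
The unifying tool for all seven parts is the complex adjoint (or complex-representation) map. Writing each quaternion as $q = c_1 + c_2\jj$ with $c_1,c_2 \in \mathbb{C}$, a matrix $A \in \mathbb{Q}^{n\times n}$ decomposes as $A = A_1 + A_2\jj$ with $A_1,A_2 \in \mathbb{C}^{n\times n}$, and we attach to it the $2n\times 2n$ complex matrix
\begin{equation*}
\chi(A) = \begin{bmatrix} A_1 & A_2 \\ -\bar A_2 & \bar A_1 \end{bmatrix}.
\end{equation*}
The plan is to first record that $\chi$ is an injective real-algebra homomorphism satisfying $\chi(I)=I$, $\chi(AB)=\chi(A)\chi(B)$ and $\chi(A^*)=\chi(A)^*$, and that the eigenvalues of $\chi(A)$ occur in conjugate pairs $\{\lambda,\bar\lambda\}$. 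This single map transfers the classical complex spectral and unitary theory to the quaternion setting, so that every statement of Theorem \ref{t2.1} reduces either to a short direct computation or to a property of $\chi(A)$.

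For Parts 1 and 2 I would argue directly. If $A\vx = \vx\lambda$ and $\mu = v^{-1}\lambda v$ for some nonzero $v \in \mathbb{Q}$, then
\begin{equation*}
A(\vx v) = (A\vx)v = \vx\lambda v = (\vx v)(v^{-1}\lambda v) = (\vx v)\mu,
\end{equation*}
so $\mu$ is again a right eigenvalue; as $\vx v$ spans the same one-dimensional right subspace as $\vx$, the eigenvector is unchanged up to right scaling, which is the content of Part 1. Part 2 is the special case $\mu=\bar\lambda$: for $\lambda\in\mathbb{C}$ one checks $\jj\lambda = \bar\lambda\jj$, hence $\bar\lambda = \jj\lambda\jj^{-1}\in[\lambda]$, and Part 1 applies.

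Parts 3--7 rest on the quaternion spectral theorem, which I would obtain in two stages. First, a quaternion Schur triangularization: by induction on $n$, choosing a standard right eigenpair at each step and completing to an orthonormal basis, every $A$ admits a unitary $U$ with $U^*AU$ upper triangular and carrying the $n$ standard right eigenvalues on its diagonal; matching these against the conjugate-paired spectrum of $\chi(A)$ yields Part 3. Second, if $A$ is normal then $U^*AU$ is normal and upper triangular, which forces it to be diagonal, giving Part 4. Parts 5 and 7 then follow by inspecting the diagonal: from $A\vx=\vx\lambda$ one gets $\|\vx\|^2\lambda = \vx^*A\vx$, whose self-conjugacy forces $\lambda\in\mathbb{R}$ when $A^*=A$, while $\|A\vx\|=\|\vx\|$ forces $|\lambda|=1$ when $A$ is unitary; the converses are immediate from $A=UDU^*$. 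Finally Part 6 follows from Parts 4 and 5 by setting $\vy=U^*\vx$ and writing $\vx^*A\vx=\sum_i \lambda_i|y_i|^2$, so the sign conditions on the quadratic form match the sign conditions on the real eigenvalues.

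The main obstacle is the pair of foundational facts behind Parts 3 and 4: the existence of a standard right eigenpair at each inductive step and the exact count of $n$ standard eigenvalues. Here noncommutativity blocks a naive determinant or characteristic-polynomial argument, so I would route both the existence and the counting entirely through $\chi(A)$, extracting a genuine quaternion eigenvector from a complex eigenvector of $\chi(A)$ in the closed upper half-plane and using the conjugate-pairing of $\operatorname{spec}(\chi(A))$ to guarantee precisely one standard representative per pair. Everything after that is bookkeeping with the orthonormal basis and the algebraic identities for $\chi$.
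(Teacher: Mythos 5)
The paper offers no proof of Theorem \ref{t2.1} at all---it is presented as a collection of known results with a pointer to \cite{WLZZ18}---so your proposal is in effect supplying the omitted argument, and it does so by exactly the standard route used in that cited literature (see also \cite{Zh97}): the complex adjoint $\chi$ with its conjugate-paired spectrum, a quaternion Schur triangularization for existence and the count of $n$ standard right eigenvalues, and then Parts 4--7 read off the diagonal, with Parts 1--2 by the direct computation $A(\vx v)=(\vx v)(v^{-1}\lambda v)$. The sketch is sound as stated; the only points deserving explicit care in a full write-up are the ones you already flag, namely that ``the same right eigenvectors'' in Parts 1--2 really means the eigenvector $\vx v$ (respectively $\vx\jj$) spanning the same right line, and that extracting a nonzero quaternion eigenvector from an eigenvector of $\chi(A)$ needs a small separate check when the eigenvalue is real.
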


The following proposition is also not difficult to prove.

\begin{Prop} \label{p2.2}
Suppose that $A \in {\mathbb Q}^{n \times n}$ is Hermitian.   Then $A$ has $n$ real right eigenvalues
$\lambda_1 \ge \cdots, \lambda_n$.   If $\lambda_i \not = \lambda_j$, $\vx$ and $\vy$ are right eigenvectors of $A$, associated with $\lambda_i$ and $\lambda_j$ respectively, then $\vx$ and $\vy$ are orthogonal.  If $\lambda$ is an right eigenvalue of $A$ with multiplicity $k$, then there are $k$ right eigenvectors of $A$, associated with $\lambda$, such that they are orthogonal to each other.
\end{Prop}

\section{Possible Extension of Hua's Theorems to Quaternion Matrices}

In Theorem \ref{t1.1}, a complex skew-symmetric matrix $Z$ is turned to the block diagonal form $\Sigma$ via a complex unitary matrix $U$ such that $UZU^\top = \Sigma$ \cite{Hu44, SW60, Yo61}.    For a quaternion skew-symmetric matrix $Z$, we wish to find a quaternion unitary matrix $U$ such that $UZU^* = \Sigma$, where $\Sigma$ is a block diagonal matrix.  This is called a canonical form under congruence \cite{Ro14}. It is useful in deriving spectral theorem of dual quaternion Hermitian matrices.   In Section 7, we will briefly describe this relation.   However, we only found canonical form under congruence for quaternion symmetric matrices and quaternion skew-Hermitian matrices \cite{Ro14, SW60}.  Thus, we wonder if we can extend Theorem \ref{t1.1} to quaternion skew-symmetric matrices.

Theorem \ref{t1.1} of this paper, i.e., Theorem 7 of \cite{Hu44} is based upon Theorem 5 of \cite{Hu44}.   The first half of Theorem 6 of \cite{Hu44} is easy to be extended to quaternion matrices.

\begin{Prop} \label{p3.1}
Suppose that $Z \in {\mathbb Q}^{n \times n}$ is a skew-symmetric matrix, i.e., $Z = -Z^\top$.   Then
$W \equiv -Z{\bar Z}$ is a positive semidefinite Hermitian matrix.
\end{Prop}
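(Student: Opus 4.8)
The plan is to reduce $W = -Z\bar Z$ to a product of the form $ZZ^*$, for which both asserted properties are immediate. The crucial observation is that skew-symmetry forces a simple relation between $\bar Z$ and $Z^*$, so that the apparently unusual product $-Z\bar Z$ is in fact a familiar Gram-type product in disguise.

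First I would establish the identity $Z^* = -\bar Z$. For a single quaternion matrix, conjugation and transposition commute entrywise (this involves no multiplication, so the quaternion subtleties do not intervene), giving $Z^* = \bar Z^\top = \overline{Z^\top}$. Applying the hypothesis $Z^\top = -Z$ and conjugating yields $Z^* = \overline{-Z} = -\bar Z$, hence $\bar Z = -Z^*$. Substituting into the definition of $W$ produces
\begin{equation}
W = -Z\bar Z = -Z\left(-Z^*\right) = ZZ^*.
\end{equation}
With $W$ in this form, the Hermitian property follows from the reversal rule $(AB)^* = B^*A^*$, which does hold over the quaternions (as recorded in Section 2): $W^* = (ZZ^*)^* = (Z^*)^*Z^* = ZZ^* = W$. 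For positive semi-definiteness I would take an arbitrary $\vx \in {\mathbb Q}^n$, set $\vy = Z^*\vx$, and note that $\vx^* W \vx = \vx^* Z Z^* \vx = \vy^*\vy = \|\vy\|^2 \ge 0$.

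The only step demanding genuine care is the reduction to $ZZ^*$. Because quaternion multiplication is noncommutative and $\overline{AB} \ne \bar A\,\bar B$ in general, one must resist manipulating the product $Z\bar Z$ by the conjugation rules valid in the complex case; the safe route is to route everything through the conjugate transpose, where the reversal identity $(AB)^* = B^*A^*$ remains available, and to use the single-matrix relation $\bar Z = -Z^*$. Once that identity is secured, the argument is formally identical to the complex case and no further obstacle arises.
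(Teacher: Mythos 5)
Your proof is correct and follows essentially the same route as the paper, which likewise rests on the identity $-Z\bar Z = ZZ^*$ (stated there without elaboration) and then reads off the Hermitian and positive semidefinite properties. You simply make explicit the steps the paper calls ``clearly,'' namely $Z^* = \overline{Z^\top} = -\bar Z$ and the Gram-product computation $\vx^* Z Z^* \vx = \|Z^*\vx\|^2 \ge 0$.
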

\begin{proof}
Clearly,
$$-Z\bar Z = ZZ^*.$$
Thus, $W \equiv -Z{\bar Z}$ is Hermitian and positive semidefinite.
\end{proof}

The second half of Theorem 6 of \cite{Hu44} says that the characteristic polynomial of $-Z\bar Z$ is a perfect square.   This implies that in the complex case, each distinct positive eigenvalue of $-Z\bar Z$ has even multiplicity.    In the next section we will show that this is no longer true for quaternion matrices.

\section{Quaternion Skew-Symmetric Matrices}

We first consider the case that $n=2$.

\begin{Prop} \label{p4.1}
Suppose that $Z = (z_{ij}) \in {\mathbb Q}^{2 \times 2}$ is a skew-symmetric matrix, and $Z \not = O$.  Then $W = -Z\bar Z$ has a double positive right eigenvalue $|a|^2$, where $a = z_{12} \not = 0$.
\end{Prop}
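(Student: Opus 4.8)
The plan is to exploit the very rigid structure that skew-symmetry forces on a $2\times 2$ quaternion matrix, reducing the entire problem to one explicit product. First I would apply the condition $Z = -Z^\top$ entrywise: it gives $z_{11} = -z_{11}$ and $z_{22} = -z_{22}$, so both diagonal entries vanish, while $z_{21} = -z_{12} = -a$. Hence $Z$ has the concrete form
$$Z = \begin{bmatrix} 0 & a \\ -a & 0 \end{bmatrix},$$
with $a = z_{12} \neq 0$ since $Z \neq O$.

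Next I would compute $W$ directly. By Proposition \ref{p3.1} we may use $W = ZZ^*$, where $Z^* = \bar Z^\top = \begin{bmatrix} 0 & -\bar a \\ \bar a & 0 \end{bmatrix}$. Carrying out the $2\times 2$ product, the off-diagonal entries cancel and each diagonal entry equals $a\bar a = |a|^2$, so I expect to obtain $W = |a|^2 I$, a real scalar multiple of the identity.

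Finally, I would read off the right-eigenvalue structure. Since $|a|^2$ is a real number, and real numbers commute with every quaternion, the identity $W\vx = |a|^2\vx = \vx\,|a|^2$ holds for every $\vx \in \mathbb{Q}^2$; thus every nonzero vector is a right eigenvector of $W$ with right eigenvalue $|a|^2$, which is positive because $a \neq 0$. In particular the two standard coordinate vectors supply two orthonormal right eigenvectors, so $|a|^2$ is a double positive right eigenvalue, as claimed.

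The arithmetic here is elementary, so I do not anticipate a genuine obstacle; the only point demanding care is the noncommutativity of quaternion multiplication. That concern is dissolved precisely by the fact that $W$ turns out to be scalar with a \emph{real} scalar, so that left and right multiplication coincide and $|a|^2$ is a bona fide right eigenvalue of multiplicity two in the sense of \eqref{ee3}, rather than an eigenvalue in some weaker sense.
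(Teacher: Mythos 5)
Your proposal is correct and follows essentially the same route as the paper: reduce $Z$ to the form $\begin{bmatrix} 0 & a \\ -a & 0 \end{bmatrix}$, compute $W = ZZ^* = |a|^2 I_2$ directly, and read off the double positive right eigenvalue. Your added remark that $|a|^2$ is \emph{real} and hence commutes with quaternions, so that $W\vx = \vx\,|a|^2$ genuinely satisfies the right-eigenvalue equation (\ref{ee3}), is a worthwhile explicit justification of a point the paper leaves implicit.
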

\begin{proof}
A $2 \times 2$ quaternion skew-symmetric matrix $Z$ has the form
$$Z = \begin{bmatrix} 0 & a \\ -a & 0 \end{bmatrix},$$
where $a \in {\mathbb Q}$.    Since $Z \not = 0$, $a \not = 0$.

Then
$$W \equiv -Z\bar Z = ZZ^* = -\begin{bmatrix} 0 & a \\ -a & 0 \end{bmatrix}\begin{bmatrix} 0 & \bar a \\ -\bar a & 0 \end{bmatrix} = \begin{bmatrix} a\bar a & 0 \\ 0 & a\bar a \end{bmatrix} = |a|^2I_2.$$
Since $Z \not = O$, $a \not = 0$.   Then, $W$ has a double positive right eigenvalue $|a|^2$.
\end{proof}

Hence, if $n=2$, the claim of Theorem 6 of Hua \cite{Hu44} may still be extended to the quaternion case.    We now consider the case that $n = 3$.

If $Z \in {\mathbb Q}^{3 \times 3}$ is a nonzero skew-symmetric matrix, then $Z$ has the form
\begin{equation}  \label{e6}
Z = \begin{bmatrix} 0 & a & c \\ -a & 0 & b \\ -c & -b & 0 \end{bmatrix},
\end{equation}
where $a, b, c \in {\mathbb Q}$, and $a$, $b$ and $c$ are not all $0$.   We may divide the situation to two cases, $a = 0$ and $a \not = 0$.    If $a=0$, then
\begin{equation}  \label{e7}
Z = \begin{bmatrix} 0 & 0 & c \\ 0 & 0 & b \\ -c & -b & 0 \end{bmatrix},
\end{equation}
and not both $b$ and $c$ are $0$.   If $a \not = 0$, without loss of generality, we may assume that $a = 1$, and
\begin{equation}  \label{e8}
Z = \begin{bmatrix} 0 & 1 & c \\ -1 & 0 & b \\ -c & -b & 0 \end{bmatrix}.
\end{equation}
where $b, c \in {\mathbb Q}$.

\begin{Thm} \label{t4.2}
Suppose that $Z \in {\mathbb Q}^{3 \times 3}$, $Z \not = O$, and is in either case (\ref{e7}) or case (\ref{e8}).   Then $W = -Z\bar Z = ZZ^*$ has a zero right eigenvalue and a double positive right eigenvalue if in either case (\ref{e7}) or case (\ref{e8}) with $\bar b\bar c = \bar c\bar b$.    In case (\ref{e8}) with $\bar b\bar c \not = \bar c\bar b$, $W$ has three positive right eigenvalues.   In this case, it is possible that these three positive right eigenvalues are distinct to each other.
\end{Thm}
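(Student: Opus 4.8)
The plan is to compute $W = ZZ^*$ explicitly in each case and read off its right eigenvalues, using throughout that $W$ is Hermitian and positive semidefinite by Proposition~\ref{p3.1}, so that by Theorem~\ref{t2.1}(6) it has exactly three real right eigenvalues $\lambda_1 \ge \lambda_2 \ge \lambda_3 \ge 0$. The organizing remark is that $0$ is a right eigenvalue of $W$ exactly when $W$ is singular; since $\vx^* W \vx = \|Z^*\vx\|^2$, the matrix $W$ is positive definite when $Z$ is nonsingular and singular when $Z$ is. Thus the presence of a zero right eigenvalue is equivalent to the singularity of $Z$, and this will drive the whole dichotomy.

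For case (\ref{e7}) a direct multiplication gives the block-diagonal form
$$W = \begin{bmatrix} |c|^2 & c\bar b & 0 \\ b\bar c & |b|^2 & 0 \\ 0 & 0 & |b|^2+|c|^2 \end{bmatrix},$$
whose leading $2\times 2$ block is the rank-one matrix $\vv\vv^*$ with $\vv = (c, b)^\top$. Since $\vv\vv^*\vv = \vv\,(|b|^2+|c|^2)$ and $\vv\vv^*$ annihilates the orthogonal complement of $\vv$, that block has right eigenvalues $|b|^2 + |c|^2$ and $0$; together with the scalar entry $|b|^2 + |c|^2$ this yields the right eigenvalues $0,\ |b|^2+|c|^2,\ |b|^2+|c|^2$. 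Hence in case (\ref{e7}), with $Z \neq O$ so that $|b|^2 + |c|^2 > 0$, the matrix $W$ has a zero right eigenvalue and a double positive right eigenvalue, with no commutativity hypothesis needed.

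For case (\ref{e8}) I would first settle the dichotomy by solving $Z\vx = \0$. The first two rows force $x_1 = b x_3$ and $x_2 = -c x_3$, and substituting into the third row gives $(bc - cb)x_3 = 0$. Thus $Z$ is singular precisely when $bc = cb$, equivalently (taking conjugates) when $\bar b \bar c = \bar c \bar b$, in which case $(b, -c, 1)^\top$ spans its kernel. When $bc \neq cb$ the matrix $Z$ is therefore nonsingular, $W$ is positive definite, and all three right eigenvalues are positive. When $bc = cb$, either both $b$ and $c$ are real or, since commuting quaternions have parallel imaginary parts, they lie in a common plane $\mathbb{R} \oplus \mathbb{R}\omega$ for some $\omega \in S^2$; identifying $\omega$ with the complex unit turns $Z$ into a nonzero $3\times 3$ complex skew-symmetric matrix under which quaternion conjugation restricts to complex conjugation. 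The second half of Theorem~6 of \cite{Hu44} then applies: the characteristic polynomial of the complex matrix $-Z\bar Z$ is a perfect square, so in the odd size $3$ its eigenvalues are $0,\lambda,\lambda$ with $\lambda > 0$. These real numbers are right eigenvalues of the quaternion matrix $W$, and since $W$ has exactly three real right eigenvalues they must be exactly $0,\lambda,\lambda$, giving the asserted zero and double positive right eigenvalue.

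It remains to show the three positive right eigenvalues can be distinct, for which I would exhibit an explicit non-commuting example and pin down its characteristic cubic through symmetric-function data accessible for quaternion Hermitian matrices: writing $\lambda_1,\lambda_2,\lambda_3$ for the eigenvalues, one has $\sum_i \lambda_i = \mathrm{tr}(W)$, $\sum_i \lambda_i^2 = \|W\|_F^2$, and $\sum_i \lambda_i^3 = \mathrm{Re}\,\mathrm{tr}(W^3)$, since $\mathrm{Re}\,\mathrm{tr}$ is cyclic and invariant under the unitary diagonalization of Theorem~\ref{t2.1}(6). Taking $b = \ii$ and $c = 2\jj$, so that $bc \neq cb$, these three power sums are $12$, $72$, and $440$, and Newton's identities give elementary symmetric functions $12$, $36$, $\tfrac{8}{3}$, so the eigenvalues are the roots of $t^3 - 12 t^2 + 36 t - \tfrac{8}{3}$. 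Its derivative $3(t-2)(t-6)$ vanishes only at $t=2$ and $t=6$, where the cubic takes the nonzero values $\tfrac{88}{3}$ and $-\tfrac{8}{3}$; hence the cubic has no repeated root, and its three roots, all positive because $W$ is positive definite, are distinct. The main obstacle is precisely this last step: the product $\lambda_1\lambda_2\lambda_3$ is a determinant-type invariant that must be extracted through the higher trace $\mathrm{Re}\,\mathrm{tr}(W^3)$, and one must avoid over-symmetric data such as $b=\ii,\ c=\jj$, which collapses to the double-root pattern $1,1,4$ and fails to separate the spectrum.
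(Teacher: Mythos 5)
Your treatment of the main dichotomy is essentially the paper's: the paper also computes $W$ explicitly and, when $\bar b\bar c \not= \bar c\bar b$, shows the kernel is trivial and concludes positive definiteness via $\vx^*W\vx = \|Z^*\vx\|^2$. (The paper solves $\bar Z\vx = \0$, so that $Z^*\vx = -\bar Z\vx \not= \0$ is immediate; you solve $Z\vx = \0$, which additionally uses the standard fact that for square quaternion matrices trivial kernel of $Z$ forces $Z^*$ to be injective -- true, but worth a word.) Where you genuinely differ is in the degenerate cases, and there your version is more rigorous than the paper's: for case (\ref{e7}) and for case (\ref{e8}) with $bc=cb$ the paper only says ``by computation, we may derive,'' whereas you give actual arguments -- the rank-one block $\vv\vv^*$ for (\ref{e7}), and for (\ref{e8}) the reduction of commuting $b,c$ to a common plane $\mathbb{R}\oplus\mathbb{R}\omega$, turning $Z$ into a complex skew-symmetric matrix to which Hua's pairing applies; the transfer back is legitimate because the complex unitary diagonalization of $W$ is also a quaternion unitary diagonalization. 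One phrasing slip there: a degree-$3$ characteristic polynomial is never literally a perfect square, so you should invoke what the paper actually extracts from Theorem 6 of \cite{Hu44} -- each positive eigenvalue of $-Z\bar Z$ has even multiplicity -- which together with $\mathrm{rank}\, Z = 2$ gives the spectrum $0,\lambda,\lambda$.

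For distinctness the paper simply reports numerical eigenvalues for $b=\ii+\jj$, $c=\ii+2\jj$; your exact Newton-identity computation for $b=\ii$, $c=2\jj$ is in principle a better, checkable argument, but it contains an arithmetic error. With your data,
$$W = \begin{bmatrix} 5 & 2\kk & \ii \\ -2\kk & 2 & -2\jj \\ -\ii & 2\jj & 5 \end{bmatrix}, \qquad
W^2 = \begin{bmatrix} 30 & 16\kk & 14\ii \\ -16\kk & 12 & -16\jj \\ -14\ii & 16\jj & 30 \end{bmatrix},$$
and the diagonal of $W^2W$ is $(196,\,88,\,196)$, so $\mathrm{Re}\,\mathrm{tr}(W^3)=480$, not $440$. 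Newton's identities then give $e_3 = (1728 - 2592 + 960)/6 = 16$, not $8/3$; this is cross-checked by $\lambda_1\lambda_2\lambda_3 = |\det\chi(Z)| = 16$, where $\chi$ is the complex adjoint representation. The correct characteristic cubic is $t^3 - 12t^2 + 36t - 16 = (t-4)(t^2-8t+4)$, with roots $4-2\sqrt{3},\ 4,\ 4+2\sqrt{3}$. So your conclusion survives -- indeed your derivative $3(t-2)(t-6)$ is unchanged since $e_1,e_2$ are correct, and the critical values become $16$ and $-16$, again nonzero -- but as written your stated power sum, your cubic, and your critical values are all wrong and must be replaced by these.
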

\begin{proof}  In case (\ref{e7}), since $Z \not = O$, $|b|^2+|c|^2 \not = 0$.   Then we may derive that $W$ has a zero right eigenvalue and a double positive right eigenvalue $|b|^2 + |c|^2$.

We now consider case (\ref{e8}).
We have
$$W = \begin{bmatrix} 1+|c|^2 & c\bar b & -{\bar b}\\
b\bar c & 1 + |b|^2 & \bar c \\ -b & c & |c|^2 + |b|^2 \end{bmatrix}$$
By \cite{WLZZ18}, we have
$$\overline{c\bar b} = b\bar c.$$
This confirms that $W$ is Hermitian.

If $\bar b\bar c = \bar c\bar b$, then by computation, we may derive that $W$ has a zero right eigenvalue and a double positive right eigenvalue $1 + |b|^2 + |c|^2$.

Suppose now that $\bar b\bar c \not = \bar c\bar b$.
Then, we have
$$\bar Z = \begin{bmatrix} 0 & 1 & \bar c \\ -1 & 0 & \bar b \\ -\bar c & -\bar b & 0 \end{bmatrix},$$
Suppose that $\bar Z\vx = \0$. Let $\vx = (x_1, x_2, x_3)^\top$.  Then we have
\begin{equation} \label{eee1}
x_2 + \bar cx_3 = 0,
\end{equation}
\begin{equation} \label{eee2}
-x_1 + \bar bx_3 = 0,
\end{equation}
\begin{equation} \label{eee3}
-\bar cx_1 - \bar bx_2 = 0.
\end{equation}
Substituting (\ref{eee1}) and (\ref{eee2}) to (\ref{eee3}), we have
$$(-\bar c\bar b+\bar b\bar c)x_3 = 0.$$
Since $\bar b\bar c \not = \bar c\bar b$, $-\bar c\bar b+\bar b\bar c \not = 0$.   Thus, $x_3 = 0$.   By (\ref{eee1}) and (\ref{eee2}), we have
$x_1 = x_2 = 0$.   Hence, $\vx = \0$.  Since $Z^\top = -Z$, we have $-Z\bar Z = ZZ^*$.   By Proposition \ref{p3.1}, $-Z\bar Z = ZZ^*$ is Hermitian.   For any $\vx \in {\mathbb Q}^3$, $\vx \not = \0$, by the above discussion, $Z^*\vx = -\bar Z\vx \not = \0$.  Then
$$\vx^*(-Z\bar Z)\vx = \vx^*(ZZ^*)\vx = (Z^*\vx)^*(Z^*\vx) = |Z^*\vx|^2 > 0.$$
Hence, $-Z\bar Z = ZZ^*$ is positive definite.  By Theorem \ref{t2.1}, it has three positive right eigenvalues.

We may let $b = \ii + \jj$ and $c = \ii + 2\jj$.    Then $bc = -3 +\kk \not = -3 -\kk = cb$.
Then
$$W = \begin{bmatrix} 6 & 3+\kk & \ii+\jj\\
3-\kk & 3 & -\ii-2\jj \\ -\ii-\jj & \ii+2\jj & 7 \end{bmatrix}.$$
By computation, we see that $W$ has three distinct right eigenvalues $\lambda_1 \approx 0.0635$, $\lambda_2 \approx 7.5726$ and $\lambda_3 \approx 8.6789$.
\end{proof}

From this theorem, we may derive the following conclusion.

\begin{Thm}\label{t4.3}
Suppose that $Z \in {\mathbb Q}^{3 \times 3}$, $Z \not = O$, and is in the general form (\ref{e6}).   Then
$W \equiv -Z\bar Z = ZZ^*$ has three positive right eigenvalues if and only if $a \not = 0$ and
\begin{equation} \label{e12}
ca^{-1}b \not = ba^{-1}c.
\end{equation}
Otherwise, $W$ has a zero right eigenvalue and a double positive right eigenvalue $|a|^2+|b|^2+|c|^2$.
\end{Thm}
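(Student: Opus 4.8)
The plan is to reduce the general form (\ref{e6}) to the two special cases (\ref{e7}) and (\ref{e8}) already handled in Theorem \ref{t4.2}, and then translate the condition $\bar b\bar c = \bar c\bar b$ of that theorem into the basis-independent condition (\ref{e12}). First I would dispose of the case $a = 0$: here $Z$ is exactly of the form (\ref{e7}), so Theorem \ref{t4.2} immediately gives that $W$ has a zero right eigenvalue and a double positive right eigenvalue. Since the positive eigenvalue in case (\ref{e7}) equals $|b|^2 + |c|^2$ and $|a|^2 = 0$, this matches the claimed value $|a|^2 + |b|^2 + |c|^2$. This also shows that when $a = 0$ the matrix $W$ never has three positive eigenvalues, consistent with the ``only if'' direction of the three-eigenvalue criterion.

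Next, for $a \neq 0$, the key reduction is a scaling argument. I would introduce the diagonal unitary matrix $U = {\rm diag}(u, u, u)$ (or more carefully a suitable diagonal unitary) that rescales $a$ to $1$; alternatively, since $a$ is a nonzero quaternion, I would write $a = |a| \cdot (a/|a|)$ with $a/|a| \in S^3$, and absorb the unit-quaternion factor by a diagonal congruence while the real scalar $|a|$ factors out of the whole computation of $W$. After this normalization $Z$ takes the form (\ref{e8}) with $b$ and $c$ replaced by rescaled quaternions $b' = a^{-1}b$ and $c' = a^{-1}c$ (the precise substitution must be read off from how the congruence $U Z U^*$ acts on the entries). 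The eigenvalues of $W$ scale by the positive real factor $|a|^2$, so positivity, multiplicity, and the distinctness of eigenvalues are all preserved under this reduction.

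With $Z$ in form (\ref{e8}) (in the rescaled variables), Theorem \ref{t4.2} says that $W$ has three positive right eigenvalues precisely when $\overline{b'}\,\overline{c'} \neq \overline{c'}\,\overline{b'}$, and otherwise has a zero eigenvalue together with a double positive eigenvalue $1 + |b'|^2 + |c'|^2$. The final step is to show that the noncommutativity condition $\overline{b'}\,\overline{c'} \neq \overline{c'}\,\overline{b'}$, after undoing the scaling $b' = a^{-1}b$, $c' = a^{-1}c$, is equivalent to (\ref{e12}), i.e.\ $ca^{-1}b \neq ba^{-1}c$. Here I would use the fact that conjugation reverses products, $\overline{pq} = \bar q\,\bar p$, so $\overline{b'}\,\overline{c'} = \overline{c' b'}$ and $\overline{c'}\,\overline{b'} = \overline{b' c'}$; the inequality $\overline{b'}\,\overline{c'} \neq \overline{c'}\,\overline{b'}$ is thus equivalent to $c'b' \neq b'c'$, and substituting $b' = a^{-1}b$, $c' = a^{-1}c$ should yield the stated form. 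Finally the value $1 + |b'|^2 + |c'|^2$ of the double eigenvalue, multiplied back by $|a|^2$, must be checked to equal $|a|^2 + |b|^2 + |c|^2$; this requires $|a|^2|b'|^2 = |b|^2$ and $|a|^2|c'|^2 = |c|^2$, which follows from $|a^{-1}b| = |b|/|a|$ and multiplicativity of the quaternion norm.

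The main obstacle I anticipate is keeping the scaling bookkeeping exact: a quaternion congruence $U Z U^*$ with a nonreal unit-quaternion diagonal does not simply replace $b, c$ by $a^{-1}b, a^{-1}c$, because multiplication is noncommutative and $U$ acts on both sides of each entry. I would need to verify carefully which combination of left- and right-multiplications reproduces form (\ref{e8}) and confirm that the resulting rescaled $b', c'$ genuinely satisfy $c'b' \neq b'c' \iff ca^{-1}b \neq ba^{-1}c$. One clean way to sidestep the subtlety is to avoid an explicit unitary reduction and instead verify the criterion (\ref{e12}) directly: redo the kernel computation of Theorem \ref{t4.2} for the unnormalized $\bar Z$ from (\ref{e6}), showing that $\bar Z \vx = \mathbf{0}$ has a nontrivial solution if and only if $ca^{-1}b = ba^{-1}c$ (the obstruction term in the back-substitution will be a scalar multiple of $ba^{-1}c - ca^{-1}b$), and then reuse the positive-definiteness argument verbatim. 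This direct route trades the scaling argument for one extra explicit (but routine) linear-algebra computation over $\mathbb{Q}$, and is likely the safer path.
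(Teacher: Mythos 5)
Your outline is, in substance, the paper's own proof: the paper also derives Theorem \ref{t4.3} from Theorem \ref{t4.2} by normalizing $a$ to $1$, so that $b$, $c$ are replaced by $a^{-1}b$, $a^{-1}c$, and then converts the resulting condition $\overline{a^{-1}b}\cdot\overline{a^{-1}c} \neq \overline{a^{-1}c}\cdot\overline{a^{-1}b}$ into (\ref{e12}) exactly as you propose, using the product-reversal rule $\overline{pq} = \bar q\,\bar p$. Your handling of the case $a=0$, your translation of the noncommutativity condition, and the norm bookkeeping $|a|^2\left(1 + |a^{-1}b|^2 + |a^{-1}c|^2\right) = |a|^2 + |b|^2 + |c|^2$ are all correct.

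The one step that would fail as written is your proposed reduction mechanism. A diagonal unitary congruence $UZU^*$ with $U = \mathrm{diag}(u,u,u)$, $|u| = 1$, replaces each entry $z_{ij}$ by $u z_{ij} \bar u$, which stays inside the similarity class $[z_{ij}]$; since $[a]$ contains a positive real number only when $a$ is itself a nonnegative real, no such congruence can normalize a general nonzero quaternion $a$ to $1$ (and $UZU^\top$ is not an option either, since transpose does not reverse quaternion products, so skew-symmetry need not be preserved). The correct --- and simpler --- mechanism is plain left scalar multiplication: $Z' = a^{-1}Z$ is skew-symmetric, is exactly of form (\ref{e8}) with $b' = a^{-1}b$, $c' = a^{-1}c$, and
$W' = Z'(Z')^* = a^{-1} W\, \overline{a^{-1}} = |a|^{-2}\, \bar u W u$ with $u = a/|a|$,
i.e.\ a unitary similarity of $W$ scaled by the positive factor $|a|^{-2}$, so positivity and multiplicities of right eigenvalues transfer; this is evidently what the paper's earlier ``without loss of generality $a = 1$'' means. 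Your fallback route (the direct kernel computation on $\bar Z$) does work --- back-substitution gives the obstruction $\left(\bar b\,\bar a^{-1}\bar c - \bar c\,\bar a^{-1}\bar b\right)x_3 = 0$, whose vanishing is conjugate-equivalent to $ca^{-1}b = ba^{-1}c$ --- but note it only settles the positive-definite versus singular dichotomy. The ``otherwise'' branch, namely that the nonzero eigenvalue is a \emph{double} eigenvalue equal to $|a|^2 + |b|^2 + |c|^2$, still requires the scaling reduction to Theorem \ref{t4.2} (or, once the multiplicity is known, a trace argument from $\mathrm{tr}\, W = \|Z\|_F^2 = 2(|a|^2 + |b|^2 + |c|^2)$). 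So keep the reduction, but implement it with $a^{-1}Z$ rather than with a congruence.
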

\begin{proof}
By Theorem \ref{t4.2}, $W$ has three positive right eigenvalues if and only if $a \not = 0$ and
\begin{equation} \label{e13}
\overline{a^{-1}b} \cdot \overline{a^{-1}c} \not = \overline{a^{-1}c} \cdot \overline{a^{-1}b}.
\end{equation}
However,
$$\overline{a^{-1}b} \cdot \overline{a^{-1}c} = (a^{-1}b)^*(a^{-1}c)^* = \left((a^{-1}c)(a^{-1}b)\right)^* =
(a^{-1}ca^{-1}b)^*,$$
$$\overline{a^{-1}c} \cdot \overline{a^{-1}b} = (a^{-1}c)^*(a^{-1}b)^* = \left((a^{-1}b)(a^{-1}c)\right)^* =
(a^{-1}ba^{-1}c)^*.$$
Thus, (\ref{e12}) and (\ref{e13}) are equivalent.
\end{proof}

Hence, for $n=3$, in case (\ref{e6}) with (\ref{e12}), $W$ has three positive right eigenvalues.    In this case, the claim of Theorem 6 of Hua \cite{Hu44} cannot be extended to the quaternion case, i.e., Theorem \ref{t1.1} cannot be extended to quaternion matrices directly.    Some changes are needed.

\section{Inverses of Skew-Symmetric Matrices}

Suppose that $Z \in {\mathbb C}^{n \times n}$ is a nonsingular complex skew-symmetric matrix.  Then
$$\left(Z^{-1}\right)^\top = \left(Z^\top\right)^{-1}=(-Z)^{-1}= -Z^{-1},$$
i.e., its inverse is also a skew-symmetric matrix.

Now we consider quaternion skew-symmetric matrices.

A $2 \times 2$ nonzero quaternion skew-symmetric matrix $Z$ has the form
$$Z = \begin{bmatrix} 0 & a \\ -a & 0 \end{bmatrix},$$
where $a \in {\mathbb Q}$, $a \not = 0$.   Then we have
$$Z^{-1} = \begin{bmatrix} 0 & -a^{-1} \\ a^{-1} & 0 \end{bmatrix},$$
which is a skew-symmetric matrix.   In general, we say a $n \times n$ nonzero quaternion skew-symmetric matrix $Z$ is solid if $W \equiv -Z\bar Z = ZZ^*$ is positive definite.    Then we have the following proposition.

\begin{Prop}
For a $3 \times 3$ quaternion skew-symmetric matrix $Z$, if it is invertible, than its inverse is not a skew-symmetric matrix.
\end{Prop}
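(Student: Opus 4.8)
The plan is to argue by contradiction, since the familiar complex proof does not survive the passage to quaternions. In the complex case one writes $\left(Z^{-1}\right)^\top = \left(Z^\top\right)^{-1} = (-Z)^{-1} = -Z^{-1}$, but this rests on the identity $(AB)^\top = B^\top A^\top$, which the excerpt has already flagged as \emph{false} for quaternion matrices. So rather than mimic that computation, I would assume for contradiction that $Z$ is invertible and that its inverse $Y \equiv Z^{-1}$ is itself skew-symmetric, and then extract a purely arithmetic contradiction from $ZY = I$.

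First I would record that invertibility forces the $(1,2)$-entry $a = z_{12}$ of $Z$ (in the general form (\ref{e6})) to be nonzero. Indeed, if $Z$ is invertible then so is $Z^*$, hence $W = ZZ^*$ is invertible; being positive semidefinite Hermitian by Proposition \ref{p3.1}, $W$ must then be positive definite and so have three positive right eigenvalues. By Theorem \ref{t4.3} this rules out $a = 0$ (equivalently, in case (\ref{e7}) Theorem \ref{t4.2} already gives $W$ a zero right eigenvalue, so $Z$ would be singular). Hence $a \neq 0$. This is the one place where I rely on a substantive earlier result rather than a determinant argument, quaternion determinants being too delicate to invoke casually.

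Next, writing the putative skew-symmetric inverse as
$$Y = \begin{bmatrix} 0 & p & r \\ -p & 0 & q \\ -r & -q & 0 \end{bmatrix},$$
I would compute only the third column of $ZY$, namely $Z(r,q,0)^\top$, taking care to keep each quaternion product in the order $Z_{ik}Y_{kj}$ dictated by matrix multiplication. Its three entries come out as $aq$, $-ar$, and $-cr - bq$, and the requirement $ZY = I$ forces these to equal $0$, $0$, and $1$ respectively. Since $a \neq 0$, left multiplication by $a^{-1}$ in the first two relations yields $q = 0$ and $r = 0$; substituting into the third relation collapses it to $0 = 1$, the desired contradiction. Therefore $Z^{-1}$ cannot be skew-symmetric.

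I do not expect a deep obstacle here: the argument is short once the right third-column slice is isolated. The only genuine care-points are (i) securing $a \neq 0$ through the eigenvalue criterion of Theorem \ref{t4.3} rather than any naive rank or determinant reasoning, and (ii) honoring noncommutativity when forming the products $Z_{ik}Y_{kj}$ — which is precisely the feature that invalidates the complex transpose argument in the first place. It is also worth remarking that the contradiction needs no normalization of $Z$ (one need not rescale $a$ to $1$), since only the inequality $a \neq 0$ is used.
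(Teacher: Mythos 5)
Your proof is correct and follows essentially the same route as the paper's: assume the inverse is skew-symmetric and extract a contradiction from $ZZ^{-1}=I$, using the eigenvalue results of Section 4 to constrain $Z$ (the paper invokes solidity via Theorem \ref{t4.2}; you invoke Theorem \ref{t4.3} only to get $a \neq 0$). If anything yours is slightly tidier: by working with the general form (\ref{e6}), needing only $a \neq 0$ and only the third column of $ZY$, you directly cover the general case that the paper dispatches with the unproved remark that it ``may be derived'' from the normalized case (\ref{e8}) with $\bar b \bar c \neq \bar c \bar b$.
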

\begin{proof}  By Theorem \ref{t4.2}, clearly, if $Z$ is invertible, then it must be solid.   We may consider the case (\ref{e8}) with $\bar b \bar c \not = \bar c \bar b$.   The general case may be derived here.   If $Z^{-1}$ is skew-symmetric, then we may assume
$$Z^{-1} = \begin{bmatrix} 0 & d & f \\ -d & 0 & e \\ -f & -d & 0 \end{bmatrix}.$$
By $ZZ^{-1} = I$, we may derive that $d=e=f=0$.   This leads to a contradiction.   Thus, $Z^{-1}$, if exists, must not be skew-symmetric in this case.
\end{proof}

{\bf Question} For $n=3$, can we show that the quaternion skew-symmetric matrix is invertible if it is solid?   Can we give a general form of $Z^{-1}$ in this case?

\section{Basic Quaternion Skew-Symmetric Matrices}

Let $Z \in {\mathbb Q}^{n \times n}$ be a skew-symmetric matrix, $Z \not = O$.   We may define basic quaternion skew-symmetric matrices by induction.   We call all $2 \times 2$ nonzero quaternion skew-symmetric matrices basic quaternion skew-symmetric matrices as the starting point.   For $n > 2$, we say a quaternion skew-symmetric matrix $Z$ is basic if there is no unitary matrix $U$ such that $UZU^* = \Sigma$ where $\Sigma$ is a block diagonal matrix and each block of $\Sigma$ is a lower dimensional skew-symmetric matrix.     Then, for $n=3$, a nonsingular skew-symmetric matrix is a basic quaternion skew-symmetric matrix.

We randomly generate $4 \times 4$ quaternion skew-symmetric matrices and make computation.   We find that
there are examples that there is a $4 \times 4$ quaternion skew-symmetric matrix $Z$ such that $W \equiv -Z\bar Z = ZZ^*$ has four distinct positive right eigenvalues.    This indicates that there are $4 \times 4$ basic quaternion skew-symmetric matrices.   The following is such an example.

\begin{example} Let $Z = Z_1 +Z_2 \ii+ Z_3 \jj + Z_3 \kk$ with
$Z_1 = \left(
         \begin{array}{cccc}
           0 & 1 & 3 & -25 \\
           -1 & 0 & -13 & -10 \\
           -3 & 13 & 0 & 10 \\
           25 & 10 & -10 & 0 \\
         \end{array}
       \right)$, $Z_2 = \left(
         \begin{array}{cccc}
           0 & 3 & 1 & 7 \\
           -3 & 0 & 1 & -6 \\
           -1 & -1 & 0 & 13 \\
           -7 & 6 & -13 & 0 \\
         \end{array}
       \right)$, $Z_3 = \left(
         \begin{array}{cccc}
           0 & 4 & -1 & -3 \\
           -4 & 0 & 1 & 0 \\
           1 & -1 & 0 & 3 \\
           3 & 0 & -3 & 0 \\
         \end{array}
       \right)$, and $Z_4 = \left(
         \begin{array}{cccc}
           0 & -1 & 0 & 9 \\
           1 & 0 & -12 & -3 \\
           0& 12 & 0 & 3 \\
          -9 & 3 & -3& 0 \\
         \end{array}
       \right)$.
Since $Z_i$'s $(i=1,\cdots, 4)$ are skew-symmetric, we have that $Z\in {\mathbb{Q}}^{4\times 4}$ is skew-symmetric. By computation, we find that all the four right eigenvalues of $W=-Z\bar{Z}$ are $\lambda_1 \approx 131.4$, $\lambda_2 \approx 235.5$, $\lambda_3 \approx 1238.3$, and $\lambda_4 \approx 1482.9$.
\end{example}


{\bf Conjecture}  For each $n \ge 4$,  there are $n \times n$ basic quaternion skew-symmetric matrices.

The right answer to the above conjecture will be important for establishing unitary equivalence theorem of quaternion skew-symmetric matrices.

\section{Spectral Theory of Dual Quaternion Matrices}

Quaternions were introduced by Hamilton in 1843 \cite{Ha43}.   In 1873, Clifford \cite{Cl73} introduced dual numbers, dual complex numbers and dual quaternions.   This results a new branch of algebra - geometric algebra or Clifford algebra.    Now, dual numbers, dual complex numbers and dual quaternions have found wide applications in automatic differentiation, geometry, mechanics, rigid body motions, robotics and computer graphics \cite{BK20, BLH19, CKJC16, Da99, Gu11, MKO14, WYL12}.

The further study and applications of dual numbers, dual complex numbers and dual quaternions inevitably lead to the study on dual number matrices, dual complex matrices, dual quaternion matrices and their spectral theories \cite{Br20, Gu21, QL21}.   In particular, recently, Gutin \cite{Gu21} studied spectral theory and singular value decomposition of dual number matrices, and we \cite{QL21} studied  spectral theory and singular value decomposition of dual complex matrices.   Surely, the next step will be the study of the spectral theory and singular value decomposition of dual quaternion matrices.   However, as we know, the development of spectral theory and singular value decomposition of dual quaternion matrices is highly related with the spectral theorem of quaternion skew-symmetric matrices.   This leads us to study this topic. In the following, we briefly describe this relation.

We may denote the set of dual quaternions as $\mathbb {DQ}$.   A dual quaternion $q \in \mathbb {DQ}$ has the form
$$q = q_{st} + q_\I\epsilon,$$
where $q_{st}, q_\I \in \mathbb {DQ}$ are the standard part and the infinitesimal part of $q$ respectively, $\epsilon$ is the infinitesimal unit, satisfying $\epsilon^2 = 0$.  The conjugate of $q$ is
$$\bar q = \bar q_{st} - q_\I\epsilon.$$

Denote the collections of $m \times n$ dual quaternion  matrices by ${\mathbb {DQ}}^{m \times n}$.  Then $A \in {\mathbb {DQ}}^{m \times n}$ can be written as
$$A = A_{st} + A_\I \epsilon,$$
where $A_{st}, A_\I \in {\mathbb {DQ}}^{m \times n}$ are the standard part and the infinitesimal part of $A$ respectively.   In the study of spectral theory of dual quaternion matrices, the core part would be the spectral theory of dual quaternion Hermitian matrices.    Then, $A$ is a dual quaternion Hermitian matrix if and only if $A_{st}$ is a quaternion Hermitian matrix, and $A_\I$ is a quaternion skew-symmetric matrix.   This is the relation between the spectral theory of dual quaternion matrices and the spectral theorem of quaternion skew-symmetric matrices, i.e., a canonical form of quaternion skew-symmetric matrices under congruence.

Then, the answer to the conjecture in the last section will also be important for establishing spectral theory of dual quaternion matrices.

\bigskip



\end{document}